\documentclass[12pt,reqno]{amsart}
\usepackage{latexsym,amsmath,amsfonts,amssymb,amsthm}
\textwidth=420pt
\evensidemargin=0pt
\oddsidemargin=0pt
\pagestyle{plain}
\theoremstyle{plain}
\newtheorem{Thm}{Theorem}

\newtheorem{Conj}{Conjecture}
\theoremstyle{definition}
\newtheorem*{Ack}{Acknowledgment}
\theoremstyle{remark}
\newtheorem*{Rem}{Remark}

\def\N{\mathbb N}

\def\P{\mathcal P}
\def\pmod #1{\ ({\rm mod}\ #1)}

\begin{document}
\title{The Romanov theorem revised}
\author{Hongze Li}
\email{lihz@sjtu.edu.cn}
\author{Hao Pan}
\email{haopan79@yahoo.com.cn}
\address{
Department of Mathematics, Shanghai Jiaotong University, Shanghai
200240, People's Republic of China} \maketitle
\begin{abstract}
Let $\P$ be the set of all primes and $\P_2=\P\cup\{p_1p_2:\,
p_1,p_2\in\P\}$. We prove that the sumset
$$
2^{\P}+\P_2=\{2^{p}+q:\, p\in\P, q\in\P_2\}
$$
has a positive lower density.
\end{abstract}

For a subset $A$ of positive integers, define $A(x)=|\{1\leq a\leq
x:\, a\in A\}|$. Let $\P$ denote the set of all primes and $
2^{\N}=\{2^n:\, n\in\N\}$, where $\N=\{0,1,2,\ldots\}$. A
classical result of Romanov \cite{Romanov34} asserts that the
sumset
$$
2^{\N}+\P=\{2^n+p:\, n\in\N, p\in\P\}
$$
has a positive lower density, i.e., there exists a positive
constant $C_R$ such that $(2^{\N}+\P)(x)>C_Rx$ for sufficiently
large $x$. Recently, the lower bound of $C_R$ has been calculated
in \cite{ChenSun04, HabsiegerRoblot06, Pintz06}. Now let
$$
\P_2=\{q:\, q\text{ is a prime or the product of two primes}\}.
$$
Motivated by Romanov's theorem, in this short note we shall show
that:
\begin{Thm}
The sumset
$$
2^{\P}+\P_2=\{2^{p}+q:\, p\in\P, q\in\P_2\}
$$
has a positive lower density.
\end{Thm}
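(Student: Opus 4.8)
The plan is to run a second--moment (Cauchy--Schwarz) argument on a representation function, in the spirit of Romanov's original proof, while carefully tracking the extra factor $\log\log x$ that enters because the exponents of $2$ are restricted to primes.

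For $n\le x$ put $r(n)=\#\{p\in\P:\ 2^p<n,\ n-2^p\in\P_2\}$, so that $(2^{\P}+\P_2)(x)\ge\#\{n\le x:\ r(n)\ge1\}\ge\big(\sum_{n\le x}r(n)\big)^2\big/\sum_{n\le x}r(n)^2$ by Cauchy--Schwarz. It therefore suffices to prove $\sum_{n\le x}r(n)\gg x$ and $\sum_{n\le x}r(n)^2\ll x$. For the first moment, $\sum_{n\le x}r(n)=\sum_{p\in\P,\,2^p<x}\P_2(x-2^p)$, and here two classical facts conspire in our favour: the prime number theorem gives $\#\{p\in\P:\ 2^p\le\sqrt x\}\gg\log x/\log\log x$, while Landau's theorem on integers with two prime factors gives $\P_2(y)\gg y\log\log y/\log y$; multiplying these yields $\sum_{n\le x}r(n)\gg x$. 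The companion upper bounds $\pi(\log_2 x)\ll\log x/\log\log x$ and $\P_2(y)\ll y\log\log y/\log y$ likewise give $\sum_{n\le x}r(n)\ll x$, which we reuse to bound the diagonal term below.

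Expanding the square, $\sum_{n\le x}r(n)^2$ is the diagonal contribution $p_1=p_2$, which is $\le\sum_{n\le x}r(n)\ll x$, plus twice the off--diagonal sum $\Sigma:=\sum_{p_1>p_2,\ 2^{p_1}\le x}\#\{m\le x:\ m\in\P_2,\ m+d\in\P_2\}$ with $d=2^{p_1}-2^{p_2}$ (always even). The first main estimate is a sieve bound
$$
\#\{m\le x:\ m\in\P_2,\ m+d\in\P_2\}\ \ll\ \frac{x(\log\log x)^2}{(\log x)^2}\cdot\frac{d}{\phi(d)}.
$$
One proves it by decomposing according to whether each of $m$, $m+d$ is prime or a product of two primes; in the product case one fixes the smallest prime factor $r$ (resp.\ $r'$) of $m$ (resp.\ $m+d$), and the remaining condition becomes a one-- or two--dimensional sieve problem whose Selberg/Brun upper bound contributes $\ll x/(r(\log x)^2)$, resp.\ $\ll x/(rr'(\log x)^2)$, times a bounded multiple of $d/\phi(d)$; summing $\sum_r 1/r\ll\log\log x$ over each of the at most two small prime factors produces the factor $(\log\log x)^2$.

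Writing $j=p_1-p_2$ we have $d/\phi(d)=2(2^j-1)/\phi(2^j-1)$, so $\Sigma\ll\frac{x(\log\log x)^2}{(\log x)^2}\sum_{j}\frac{2^j-1}{\phi(2^j-1)}\,\tau(j)$, where $\tau(j)=\#\{p\le\log_2 x:\ p,p+j\in\P\}$. A Brun--type sieve in the short interval $[1,\log_2 x]$ gives $\tau(j)\ll(\log_2 x)(\log\log x)^{-2}(j/\phi(j))$ for even $j$ (and trivially $\tau(j)\le1$ for odd $j$), so everything reduces to the estimate
$$
\sum_{j\le M}\frac{2^j-1}{\phi(2^j-1)}\cdot\frac{j}{\phi(j)}\ \ll\ M\qquad(M=\log_2 x),
$$
a weighted refinement of Romanov's classical lemma $\sum_{j\le M}(2^j-1)/\phi(2^j-1)\ll M$. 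Inserting $n/\phi(n)=\sum_{e\mid n}\mu^2(e)/\phi(e)$ for both factors, interchanging summations, and using $e\mid 2^j-1\iff\ord_e(2)\mid j$, this comes down to convergence of $\sum_{a,b}\mu^2(a)\mu^2(b)\big/\big(\phi(a)\phi(b)\,\mathrm{lcm}(\ord_a(2),b)\big)$, whose decisive input is that $\ord_a(2)$ is large on average --- ultimately $\sum_p 1/(p\cdot\ord_p(2))<\infty$, since there are $O(k/\log k)$ primes $p$ with $\ord_p(2)=k$ and each satisfies $p\equiv1\pmod k$. Granting the weighted lemma, $\Sigma\ll\frac{x(\log\log x)^2}{(\log x)^2}\cdot\frac{M^2}{(\log M)^2}\asymp x$, hence $\sum_{n\le x}r(n)^2\ll x$ and the theorem follows. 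The hard part is precisely this weighted Romanov--type lemma together with pinning down the exact exponent of $\log\log x$ in the $\P_2$--correlation estimate: the argument works only because these powers of $\log\log x$ cancel exactly against the factor $\log x/\log\log x$ that is lost in restricting $2^{\N}$ to $2^{\P}$.
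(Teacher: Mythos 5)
Your overall architecture coincides with the paper's: a representation function, Cauchy--Schwarz, a correlation estimate for $\P_2$-pairs with a saving $(\log\log x)^2/(\log x)^2$ times a singular series, a twin-prime-type upper bound for the number of prime pairs $p_1-p_2=j$ below $\log x/\log 2$, and a weighted Romanov-type lemma whose proof (expanding $n/\phi(n)$ over squarefree divisors, using $e\mid 2^j-1\iff \ord_e(2)\mid j$, and the convergence coming from $\sum_p 1/(p\,\ord_p(2))<\infty$) is essentially the paper's final computation. Those last steps are sound. The genuine gap is in the correlation estimate $\#\{m\le x:\ m,m+d\in\P_2\}\ll x(\log\log x)^{2}(\log x)^{-2}\,d/\phi(d)$ as you propose to prove it. When you fix the least prime factors $r$ of $m$ and $r'$ of $m+d$, these may be as large as $\sqrt{x}$, and the two-dimensional Selberg sieve then runs over an interval of length $x/(rr')$, so its bound is $\ll \frac{x/(rr')}{(\log(x/(rr')))^{2}}\cdot\frac{d}{\phi(d)}$, not $\ll \frac{x}{rr'(\log x)^{2}}\cdot\frac{d}{\phi(d)}$ as you wrote. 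For $rr'$ close to $x$ your stated per-pair bound is $o(1)$ while the true count can equal $1$, so that inequality is false in this range; and when $x/(rr')$ is bounded the sieve gives nothing at all, while summing the trivial bound over such pairs is circular (each surviving pair already corresponds to one of the $m$ you are trying to count). Thus the regime in which $m$ and/or $m+d$ is a product of two primes of comparable size is not covered by ``sum $\sum_r 1/r\ll\log\log x$'', and without it $\sum_{n\le x}r(n)^2\ll x$ is not established.

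The paper removes this regime at the outset: it replaces $\P_2$ by $\P_2^{*}=\{q\in\P_2:\ \text{the least prime factor of }q\text{ is }<q^{1/3}\}$ (primes included), checks via Chebyshev and Mertens that $\P_2^{*}(x)\gg x\log\log x/\log x$, so the first moment is unaffected, and then in the correlation estimate the fixed small primes are $\le x^{1/3}$, every sieve length is $\ge x^{1/3}$, and the sums $\sum_{p_1,p_2\le x^{1/3}}1/(p_1p_2)$ produce the $(\log\log x)^2$ legitimately. If you insist on working with all of $\P_2$, you must add a separate case: when the least prime factor of $m$ (or of $m+d$) exceeds $x^{1/3}$, do not fix it, but instead sieve $m$, or $m+d$, or the product $m(m+d)$, for absence of prime factors below $x^{1/4}$; this yields $\ll x(\log x)^{-2}\,d/\phi(d)$ for that range and restores the claimed estimate. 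With either device your argument closes and matches the paper's; as written, the key sieve step would fail.
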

\begin{proof}
In our proof, the implied constants by $\ll$, $\gg$ and $O(\cdot)$
will be always absolute.

For $q\in\P_2\setminus\P$, let $\psi(q)$ be the least prime factor
of $q$. And we set $\psi(p)=1$ if $p\in\P$. Let
$$
\P_2^*=\{q\in\P_2:\, \psi(q)<q^{\frac{1}{3}}\}.
$$
It suffices to show that $2^{\P}+\P_2^*$ has a positive lower
density.

In view of the Chebyshev theorem, we have
\begin{align*}
\P_2^*(x)=&|\{(p_1,p_2):\,p_1\in\P\cup\{1\}, p_2\in\P, p_1^2<
p_2\leq x/p_1\}|\\
\geq&\sum_{\substack{p_1\in\P\\ p_1 \leq
x^{\frac{1}{3}}}}\bigg(\frac{x/p_1}{5\log(x/p_1)}-\frac{5p_1^2}{\log(p_1^2)}\bigg)
\\
\geq&\frac{x}{5\log x}\sum_{\substack{p_1\in\P\\ p_1 \leq
x^{\frac{1}{3}}}}\frac{1}{p_1}-\frac{x^{\frac{1}{3}}}{\log(x^{\frac{1}{3}})}\cdot\frac{5x^{\frac{2}{3}}}{\log(x^{\frac{2}{3}})}.
\end{align*}
It follows that $\P_2^*(x)\gg x\log\log x/\log x$ since
$$
\sum_{p\in\P\cap[1,x]}\frac{1}{p}=(1+o(1))\log\log x.
$$
Similarly it is not difficult to deduce that $\P_2(x)\ll x\log\log
x/\log x$. Let
$$
r(n)=|\{(p,q):\, n=2^{p}+q,\ p\in\P,\ q\in\P_2^*\}|.
$$
Clearly we have
\begin{align*}
\sum_{n\leq x}r(n)=&|\{(p, q):\, p\in\P, q\in\P_2^*,
2^{p}+q\leq x\}|\\
\geq&2^{\P}(x/2)\P_2^*(x/2)\\
\gg&\frac{\log x}{\log\log x}\cdot\frac{x\log\log x}{\log x}=x.
\end{align*}
And by Cauchy-Schwarz's inequality,
$$
\bigg(\sum_{n\leq x}r(n)\bigg)^2\leq(2^{\P}+\P_2^*)(x)\sum_{n\leq
x}r(n)^2.
$$
Therefore we only need to prove that
\begin{equation}
\label{r2} \sum_{n\leq x}r(n)^2 =|\{(p_1,p_2,q_1,q_2):\,
p_1,p_2\in\P,\ q_1,q_2\in\P_2^*,\ 2^{p_1}+q_1=2^{p_2}+q_2\leq x\}|
\end{equation}
is $O(x)$.

Below we shall show that the following
\begin{equation}
\label{selbergp2} |\{q\leq x-N:\, q,
q+N\in\P_2^*\}|\ll\frac{x(\log \log x)^2}{(\log x)^2}\prod_{p\mid
N}\bigg(1+\frac{1}{p}\bigg)
\end{equation}
is hold for arbitrary positive even integer $N$. Define
$$
\mathfrak{S}(n)=\prod_{p\mid n}\bigg(1+\frac{1}{p}\bigg).
$$
As an application of Selberg's sieve method (cf. \cite[Sections
 7.2 and 7.3]{Nathanson96}), we know that
\begin{equation}
\label{selberg} |\{1\leq n\leq x:\, k_1n+l_1,
k_2n+l_2\in\P\}|\ll\frac{x}{(\log x)^2}\mathfrak{S}
(k_2l_1-k_1l_2).
\end{equation}
for non-negative integers $k_1,k_2,l_1,l_2$ with $(k_i,l_i)=1$ and
$2\mid k_2l_1-k_1l_2$. Observe that $n, n+N\in\P_2^*$ if and only
if there exist $p_1, p_2\in\P$ such that $n/p_1, (n+N)/p_2\in\P$.
Assume that $n/p_1=p_2m+l$ where $1\leq l\leq p_2$. Then
$$
(n+N)/p_2=(p_1p_2m+p_1l+N)/p_2=p_1m+(p_1l+N)/p_2,
$$
whence $p_1l\equiv -N\pmod{p_2}$. Note that $l$ is uniquely
determined by $p_1$ and $p_2$ unless $p_1=p_2$. Thus
\begin{align*}
&|\{n\leq x:\, n, n+N\in\P_2^*,\ p_1\mid n,\ p_2\mid (n+N)\}|\\
\leq&\begin{cases} |\{m\leq x/p_1:\, m, m+N/p_1\in\P\}|&\text{if }
p_1=p_2,\\
|\{m\leq x/p_1p_2:\, p_2m+l,
p_1m+(p_1l+N)/p_2\in\P\}|&\text{otherwise}.
\end{cases}\\
\ll&\begin{cases} \frac{x/p_1}{(\log(x/p_1))^2}\mathfrak{S}
(N/p_1)&\text{if }
p_1=p_2\mid N,\\
\frac{x/p_1p_2}{(\log(x/p_1p_2))^2}\mathfrak{S}
(N)&\text{otherwise}.
\end{cases}\\
\end{align*}
Therefore
\begin{align*}
&|\{q\leq x-N:\, q, q+N\in\P_2^*\}|\\
\ll&\sum_{\substack{p_1, p_2\in\P\\
p_1,p_2\leq
x^{\frac{1}{3}}}}\frac{x/p_1p_2}{(\log(x/p_1p_2))^2}\mathfrak{S}
(N)+\sum_{\substack{p\in\P\\p\mid N, p\leq
x^{\frac{1}{3}}}}\frac{x/p}{(\log(x/p))^2}\mathfrak{S}(N/p).
\end{align*}
Now
$$
\sum_{\substack{p_1, p_2\in\P\\
p_1,p_2\leq
x^{\frac{1}{3}}}}\frac{x/p_1p_2}{(\log(x/p_1p_2))^2}\leq\frac{9x}{(\log x)^2}\sum_{\substack{p_1, p_2\in\P\\
p_1,p_2\leq x^{\frac{1}{3}}}}\frac{1}{p_1p_2}\ll\frac{x(\log\log
x)^2}{(\log x)^2}.
$$
And
$$
\sum_{\substack{p\in\P\\p\mid N, p\leq
x^{\frac{1}{3}}}}\frac{x/p}{(\log(x/p))^2}\leq\sum_{\substack{p\in\P\\
p\leq x^{\frac{1}{3}}}}\frac{x/p}{(\log(x/p))^2}\ll\frac{x\log\log
x}{(\log x)^2}.
$$
This concludes the proof of (\ref{selbergp2}).

Let us return to the proof of (\ref{r2}). Clearly
$$
\sum_{n\leq x}r(n)^2\leq2\sum_{\substack{p_1,p_2\in\P\\
p_2\leq p_1\leq\log x/\log 2}}|\{q_1\in\P_2^*:\,
2^{p_1}-2^{p_2}+q_1\in\P_2^*\cap[1,x]\}|.
$$
If $p_1=p_2$, then
$$
\sum_{q_1\in\P_2^*\cap[1,x]}|\{q_2\in\P_2^*\cap[1,x]:\,
q_2=2^{p_1}-2^{p_2}+q_1\}|=\P_2^*(x)\ll\frac{x\log\log x}{\log x}.
$$
And if $p_1\not=p_2$, then
$$
\sum_{q_1\in\P_2^*\cap[1,x]}|\{q_2\in P_2^*\cap[1,x]:\,
q_2=2^{p_1}-2^{p_2}+q_1\}|\ll\frac{x(\log\log x)^2}{(\log
x)^2}\prod_{p\mid (2^{p_1-p_2}-1)}\bigg(1+\frac{1}{p}\bigg).
$$
Hence, by (3) we have
\begin{align*}
\sum_{n\leq x}r(n)^2\ll&\P(\frac{\log x}{\log 2})\frac{x\log\log
x}{\log x}+\frac{x(\log\log x)^2}{(\log
x)^2}\sum_{\substack{p_1,p_2\in\P\\
p_2<p_1\leq\frac{\log x}{\log 2}}}\prod_{p\mid
(2^{p_1-p_2}-1)}\bigg(1+\frac{1}{p}\bigg)\\
\ll&\frac{\log x}{\log\log x}\cdot\frac{x\log\log x}{\log
x}+\frac{x(\log\log x)^2}{(\log x)^2}\sum_{\substack{k\leq
\frac{\log x}{\log 2}}}2\prod_{p\mid
(2^{k}-1)}\bigg(1+\frac{1}{p}\bigg)\sum_{\substack{p_1,p_2\in\P\\
p_2<p_1\leq\frac{\log x}{\log 2}\\p_1-p_2=k}}1\\
\ll&x+\frac{x(\log\log x)^2}{(\log x)^2}\cdot\frac{2\log
x}{(\log\log x)^2}\sum_{\substack{0<k\leq \frac{\log x}{\log
2}}}\prod_{p\mid (2^{k}-1)}\bigg(1+\frac{1}{p}\bigg)\prod_{p\mid
k}\bigg(1+\frac{1}{p}\bigg).
\end{align*}
For any positive odd integer $d$, let $e(d)$ denote the least
positive integer such that $2^{e(d)}\equiv 1\pmod{d}$. Then
$2^k\equiv 1\pmod{d}$ if and only if $e(d)\mid k$. Now
\begin{align*}
\sum_{n\leq x}r(n)^2\ll&x+\frac{2x}{\log x}\sum_{\substack{0<k\leq
\frac{\log x}{\log 2}}}\prod_{p\mid
k}\bigg(1+\frac{1}{p}\bigg)\sum_{\substack{d\mid (2^{k}-1)\\
d\text{ square-free}}}\frac{1}{d}\\
=&x+\frac{2x}{\log x}\sum_{\substack{d\text{ square-free}\\ 2\nmid d}}\frac{1}{d}\sum_{\substack{0<k\leq \frac{\log x}{\log 2}\\
e(d)\mid k}}\prod_{p\mid
k}\bigg(1+\frac{1}{p}\bigg)\\
=&x+\frac{2x}{\log x}\sum_{\substack{d\text{ square-free}\\ 2\nmid d}}\frac{1}{d}\sum_{d'\text{ square-free}}\frac{1}{d'}\sum_{\substack{0<k\leq \frac{\log x}{\log 2}\\
e(d)\mid k, d'\mid k}}1\\
\leq&x+\frac{2x}{\log x}\cdot\frac{\log x}{\log
2}\sum_{\substack{d, d'\text{ square-free}\\ 2\nmid
d}}\frac{1}{dd'[e(d),d']}.
\end{align*}
Our final task is to show that the series
$$
\sum_{\substack{d, d'\text{ square-free}\\ 2\nmid
d}}\frac{1}{dd'[e(d),d']}
$$
converges. Clearly
$$
\sum_{\substack{d, d'\text{ square-free}\\ 2\nmid
d}}\frac{1}{dd'[e(d),d']}=\sum_{k>0}\sum_{d'\text{
square-free}}\frac{1}{d'[k,d']}\sum_{\substack{d\text{
square-free}\\ e(d)=k}}\frac{1}{d}.
$$
Let
$$
W(x)=\sum_{0<k\leq x}\sum_{\substack{d\text{ square-free}\\
e(d)=k}}\frac{1}{d}.
$$
With help of the arguments of Romanov (cf. \cite{Romanov34},
\cite[pp. 203]{Nathanson96}), we know that $W(x)\ll\log x$. And
\begin{align*}
\sum_{d'\text{
square-free}}\frac{1}{d'[k,d']}=&\frac{1}{k}\prod_{p\in\P, p\mid
k}\bigg(1+\frac{1}{p}\bigg)\prod_{p\in\P, p\nmid
k}\bigg(1+\frac{1}{p^2}\bigg)\\
\ll&\frac{1}{k}\prod_{p\in\P, p\mid
k}\bigg(1+\frac{1}{p}\bigg)\leq\frac{1}{\phi(k)}\ll
k^{-\frac{2}{3}}.
\end{align*}
Hence
\begin{align*}
\sum_{\substack{d, d'\text{ square-free}\\ 2\nmid
d}}\frac{1}{dd'[e(d),d']}\ll&\int_{\frac{1}{2}}^\infty
x^{-\frac{2}{3}}d
W(x)=\int_{\frac{1}{2}}^\infty \frac{2W(x)}{3x^{\frac{5}{3}}}dx+O(1)\\
\ll&\int_{\frac{1}{2}}^\infty \frac{\log
x}{x^{\frac{5}{3}}}dx+O(1)\ll 1.
\end{align*}
All are done.
\end{proof}

\begin{Rem}
Professor Y.-G. Chen told the second author two of his
conjectures:
\begin{Conj}
Let $A$ and $B$ be two subsets of positive integers. If there
exists a constant $c>0$ such that $A(\log x /\log2)B(x)>cx$ for
all sufficiently large $x$, then the set $\{2^a+b : a\in A, b\in
B\}$ has the positive lower asymptotic density.
\end{Conj}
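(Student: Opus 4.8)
The natural plan is to run the argument that proves the Theorem in this abstract setting, with the $\P$-type sets replaced by $A$ and $B$. Define $r(n)=|\{(a,b):\ n=2^a+b,\ a\in A,\ b\in B\}|$. Restricting to $2^a\le x/2$ and $b\le x/2$, the hypothesis gives $\sum_{n\le x}r(n)\ge A(\log(x/2)/\log 2)\,B(x/2)\gg x$, and by Cauchy--Schwarz $\big(\sum_{n\le x}r(n)\big)^2\le(2^A+B)(x)\sum_{n\le x}r(n)^2$. Hence it suffices to prove $\sum_{n\le x}r(n)^2=O(x)$, i.e. that the number of quadruples $(a_1,a_2,b_1,b_2)$ with $a_i\in A$, $b_i\in B$ and $2^{a_1}+b_1=2^{a_2}+b_2\le x$ is $O(x)$.

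By symmetry this count is at most twice the number of such quadruples with $a_1\ge a_2$, and for a fixed admissible ordered pair $(a_1,a_2)$ the inner count equals $|\{b\le x:\ b\in B,\ b+(2^{a_1}-2^{a_2})\in B\}|$. Splitting off the diagonal $a_1=a_2$ (which forces $b_1=b_2$ and contributes at most $A(\log x/\log 2)\,B(x)$), the remaining sum is
$$\sum_{a_2<a_1\le\log x/\log 2}\big|\{b\le x:\ b,\ b+(2^{a_1}-2^{a_2})\in B\}\big|.$$
The proof of the Theorem handles exactly this when $B=\P_2^*$: a Selberg--sieve estimate of the shape $|\{b\le x:\ b,b+D\in B\}|\ll\frac{B(x)^2}{x}\mathfrak{S}(D)$, together with the Romanov-type convergence bound --- applied to $D=2^{a_1}-2^{a_2}=2^{a_2}(2^{a_1-a_2}-1)$ and summed against $|\{(a_1,a_2):\ a_1-a_2=k\}|$ --- bounds the off-diagonal by $O(x)$, while the diagonal $A(\log x/\log 2)\,B(x)$ happens to be $\asymp x$. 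So the conjecture certainly holds whenever $B$ is contained in a set sifted of small primes, and the content is entirely in passing to arbitrary $B$.

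The obstacle --- and I think it is genuine, not a technicality --- is that for an \emph{arbitrary} $B$ satisfying only the density hypothesis neither ingredient survives: there is no singular-series factor $\mathfrak{S}(D)$ controlling the correlation $|\{b\le x:\ b,b+D\in B\}|$, which a priori can be as large as $B(x)$ for every $D$ of the relevant form, and the hypothesis supplies only a \emph{lower} bound on $A(\log x/\log 2)B(x)$, so the diagonal need not be $O(x)$ either; worse, the Cauchy--Schwarz step loses too much as soon as the diagonal exceeds a constant times $x$. A short computation (using $A(\log x/\log 2)\le\log x/\log 2$) shows that any would-be counterexample must have $x/\log x\ll B(x)=o(x)$ with $A$ rather dense, so the case to scrutinise is $A=\zp$ together with a carefully designed sparse $B$ for which every translate $B+(2^{k_1}-2^{k_2})$, $k_i\le\log x/\log 2$, meets $B$ in all but a negligible fraction of its elements. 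I would therefore pursue one of two more modest goals: prove the conjecture under the additional hypothesis that $B$ lies in a sifted set, where the argument above is essentially the proof of the Theorem, or else construct such a $B$ and thereby show that the conjecture, as literally stated, is false.
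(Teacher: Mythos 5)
The statement you were handed is Conjecture 1 of the paper: it is attributed to Y.-G.~Chen, stated in a closing remark, and the paper offers no proof of it, so there is no argument of the authors to compare yours against --- and your proposal, correctly, does not claim to supply one. Your diagnosis of why the paper's own method does not extend is accurate: the chain (lower bound for $\sum_{n\le x}r(n)$, Cauchy--Schwarz, a Selberg-sieve bound for the correlations $|\{b\le x:\ b,\ b+2^{a_1}-2^{a_2}\in B\}|$ with the singular series $\mathfrak{S}(2^{a_2}(2^{a_1-a_2}-1))$, and the Romanov-type convergence of $\sum_{d,d'}1/(dd'[e(d),d'])$) uses the multiplicative structure of $\P_2^*$ in an essential way, and the bare hypothesis $A(\log x/\log 2)B(x)>cx$ gives no control whatsoever on additive correlations of $B$. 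Your proposed fallback --- proving the statement when $B$ lies in a suitably sifted set --- is in substance the theorem the paper already proves, so it does not advance the conjecture itself; your other suggested route (constructing a counterexample) is likewise left unexecuted. So what you have is a sound explanation of why the statement is hard, not a proof, which matches its status in the paper as an open conjecture.

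Two small corrections to your obstruction analysis. First, a counterexample need only have $\liminf_{x\to\infty} B(x)/x=0$: the hypothesis is a lower bound on $A(\log x/\log 2)B(x)$, so it does not force $B(x)=o(x)$. Second, the diagonal is not where Cauchy--Schwarz breaks: the diagonal contribution to $\sum_{n\le x}r(n)^2$ equals $S:=\sum_{n\le x}r(n)$, and when $S\gg x$ the bound one needs is $\sum_{n\le x}r(n)^2\ll S^2/x$, which is weaker than $O(x)$ in that regime and is always satisfied by the diagonal alone; the genuine loss comes entirely from the uncontrolled off-diagonal correlations $|\{b\le x:\ b,\ b+D\in B\}|$, which for an arbitrary $B$ can be as large as $B(x)$ for every relevant shift $D$. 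Neither point rescues or refutes the conjecture; your honest conclusion that the statement remains open under the paper's techniques is the right one.
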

\begin{Conj}
Let $A$ and $B$ be two subsets of positive integers. If there
exists a constant $c>0$ such that $A(\log x /\log2)B(x)>cx$ for
infinitely many positive integers $x$, then the set $\{2^a+b :
a\in A, b\in B\}$ has the positive upper asymptotic
density.
\end{Conj}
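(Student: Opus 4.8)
The plan is to run Romanov's second-moment method, but now \emph{relative} to the first moment of the representation function rather than against a fixed linear bound. Put
$$
r(n)=|\{(a,b):\,n=2^a+b,\ a\in A,\ b\in B\}|,\qquad S=\{2^a+b:\,a\in A,\ b\in B\},
$$
and write $R(x)=\sum_{n\le x}r(n)$. Retaining only the pairs with $2^a\le x/2$ and $b\le x/2$ gives
$$
R(x)=|\{(a,b)\in A\times B:\,2^a+b\le x\}|\ge A\Big(\frac{\log x}{\log 2}-1\Big)B(x/2),
$$
and applying the hypothesis with $x$ replaced by $x/2$ (so that $\log(x/2)/\log 2=\log x/\log 2-1$) yields $R(x)\gg x$. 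Since $r$ is supported on $S$, Cauchy--Schwarz gives $R(x)^2\le S(x)\sum_{n\le x}r(n)^2$; because $R(x)\gg x$, the conclusion $S(x)\ge R(x)^2/\sum_{n\le x}r(n)^2\gg x$ would follow once we establish the relative bound
$$
\sum_{n\le x}r(n)^2\ll\frac{R(x)^2}{x}.
$$
In the Theorem this specializes to the familiar $\sum r(n)^2=O(x)$ because there $R(x)\asymp x$.

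Next I would expand the second moment and isolate the diagonal. Writing $\sum_{n\le x}r(n)^2=|\{(a_1,a_2,b_1,b_2):\,2^{a_1}+b_1=2^{a_2}+b_2\le x\}|$ and noting that $a_1=a_2$ forces $b_1=b_2$, one gets
$$
\sum_{n\le x}r(n)^2=R(x)+2\sum_{\substack{a_1,a_2\in A\\ a_1>a_2}}C_B\big(2^{a_2}(2^{a_1-a_2}-1)\big),\qquad C_B(N)=|\{(b_1,b_2)\in B^2:\,b_2-b_1=N,\ b_2\le x\}|.
$$
The diagonal term $R(x)$ is harmless, since $R(x)\ll R(x)^2/x$. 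As $a_1,a_2\le\log x/\log 2$, the outer sum ranges over $O((\log x)^2)$ differences of the shape $2^{a_2}(2^{k}-1)$, so the whole problem collapses to estimating the additive correlations of $B$ along these differences: it remains to prove
$$
\sum_{\substack{a_1,a_2\in A\\ a_1>a_2}}C_B\big(2^{a_2}(2^{a_1-a_2}-1)\big)\ll\frac{R(x)^2}{x}.
$$

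This correlation estimate is the essential obstacle, and the reason the statement is conjectural rather than proved. When $B=\P_2^*$, Selberg's sieve supplies exactly the input needed: the bound (\ref{selbergp2}) gives $C_B(N)\ll \frac{x(\log\log x)^2}{(\log x)^2}\mathfrak{S}(N)$, and the multiplicative factor $\mathfrak{S}(N)$ is then controlled after summing over $N=2^{k}-1$ through Romanov's estimate $W(x)\ll\log x$ and the convergence of $\sum_{d,d'}1/(dd'[e(d),d'])$, precisely as carried out in the Theorem. For an \emph{arbitrary} $B$ no analogue of (\ref{selbergp2}) is available: a single difference $N$ can occur $B(x)$ times (for instance when $B$ is an arithmetic progression), and the trivial estimate $\sum_{a_1>a_2}C_B(\cdot)\le(\log x)^2 B(x)$ misses the target $R(x)^2/x$ by a factor of order $(\log x)^2/A(\log x/\log 2)$, which is unbounded since $A(\log x/\log 2)\le\log x/\log 2$. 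Thus the cardinality hypothesis $A(\log x/\log 2)B(x)>cx$ governs only the first moment $R(x)$, and closing the argument seems to require genuine control of the additive correlations $C_B(N)$ of $B$ along the differences $2^{a_2}(2^{a_1-a_2}-1)$---the kind of upper bound a sieve provides for structured sets, but which is simply false for a general $B$---or else a method that bypasses the second moment entirely. Supplying such control is exactly the open core of the problem.
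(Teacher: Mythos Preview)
This statement is Conjecture~2 in the paper, attributed to Y.-G.~Chen; the paper offers no proof of it. Your write-up is not a proof either, and you say so explicitly: you set up the Romanov second-moment framework (exactly the machinery the paper deploys for its Theorem, where $A=\P$ and $B=\P_2^*$), isolate the correlation sum $\sum_{a_1>a_2}C_B\big(2^{a_2}(2^{a_1-a_2}-1)\big)$ as the crux, and then explain why the needed bound is unavailable for a general $B$---your arithmetic-progression example and the factor-of-$(\log x)^2/A(\log x/\log 2)$ shortfall are both correct observations. So there is nothing in the paper to compare against; your diagnosis of the obstruction is sound and matches how the paper's proof of the Theorem would (and does) break down once the sieve input (\ref{selbergp2}) is removed.

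One minor remark on the first-moment step: the hypothesis supplies $A(\log x/\log 2)B(x)>cx$ only for \emph{infinitely many} $x$, so your assertion ``$R(x)\gg x$'' should be read along a subsequence (take $y=2x_n$, so that $R(y)\ge A(\log x_n/\log 2)B(x_n)>cx_n=cy/2$). This is consistent with the target of positive \emph{upper} density, but the blanket $\gg$ is a slight abuse of notation.
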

\end{Rem}
\begin{Ack} We thank Professor Yong-Gao Chen for his helpful
discussions on this paper.
\end{Ack}

\end{document}